\providecommand{\U}[1]{\protect\rule{.1in}{.1in}}
\tikzset{>=Triangle}
\newtheorem{theorem}{Theorem}[section]
\newtheorem{remark}[theorem]{Remark}
\newtheorem{lemma}[theorem]{Lemma}
\newtheorem{problem}[theorem]{Problem}
\newtheorem{definition}[theorem]{Definition}
\numberwithin{equation}{section}
\pgfplotsset{compat=1.17}
\begin{document}
\title[On a theorem due to Murray]{On a theorem due to Murray}
\author[Barbosa]{Anderson Barbosa}
\address[A. Barbosa]{Departamento de Matem\'atica \\
Universidade Federal da Para\'iba \\
58.051-900 Jo\~ao Pessoa, Brazil.}
\email{afsb@academico.ufpb.br}
\author[A. Raposo Jr.]{Anselmo Raposo Jr.}
\address[A. Raposo Jr.]{Departamento de Matem\'{a}tica \\
Universidade Federal do Maranh\~{a}o \\
65085-580 - S\~{a}o Lu\'{\i}s, Brazil.}
\email{anselmo.junior@ufma.br}
\author[G. Ribeiro]{Geivison Ribeiro}
\address[G. Ribeiro -- \textit{Corresponding author}]{Departamento de Matem\'{a}tica \\
Universidade Federal da Para\'{\i}ba \\
58.051-900 - Jo\~{a}o Pessoa, Brazil.}
\email{geivison.ribeiro@academico.ufpb.br}
\thanks{$^{\ast}$Corresponding author.}
\thanks{The third author was supported by Grant 2022/1962 from the Para\'{\i}ba State
Research Foundation (FAPESQ) }
\subjclass[2020]{15A03, 46B15, 46A16, 46B20, 46B87, 47A05}
\keywords{Banach spaces, quasicomplements, quasicomplemented subspaces, lineability, spaceability.}

\begin{abstract}
In this paper, we introduce the notions of $\alpha$-quasicomplemented and
totally $\alpha$-quasicomplemented subspaces and we established some results
under these contexts. We show, for example, that if $X$ is a separable or
reflexive Banach space and $Y$ is a closed infinite codimensional subspace of
$X$, then $Y$ is totally$\mathit{\ }\alpha$-quasicomplemented if, and only if,
$\alpha<\aleph_{0}$ $\left(  \text{this is an analogue of the theorem of
Murray-Mackey and Lindenstrauss}\right)  $. We also show that if $H$ is a
Hilbert space and $Y,W\subset H$ are closed subspaces of $H$ such that $W$ is
orthogonal to $Y$ and $\operatorname{codim}\left(  Y+W\right)  =\infty$, then
$Y$ has a quasicomplement $Z$ containing $W$ with $\dim Z/W=\infty$. Other
results in the different contexts are also included. Such results establish a
connection between the theory of quasicomplemented subspaces and $\left(
\alpha,\beta\right)  $-spaceability.

\end{abstract}
\maketitle

\section{Introduction and background}

If $X$ is a topological vector space, we say that a closed subspace $Y$ of $X$
is \textbf{quasicomplemented} in $X$ if there is a closed subspace $Z$ in $X$
such that\ $Y\cap Z=\left\{  0\right\}  $ and $Y+Z$ is dense in $X$. In
addition, if $Y+Z\neq X$, then we say that $Y$ is proper quasicomplemented in
$X$. The subspaces $Y$ and $Z$ above are said be \textbf{quasicomplements} (if
$Y+Z$ is not closed, then we say that $Y$ and $Z$ are \textbf{proper
quasicomplements}). This concept was coined by Murray \cite{MURRAY}, who
proved that every closed subspace of a separable and reflexive space is
quasicomplemented. Removing the reflexivity hypothesis, Mackey \cite{MACKEY}
showed that the same statement is true in any separable Banach space. More
precisely, if $X$ is a separable Banach space, then every closed subspace of
$X$ is quasicomplemented in $X$. The last result mentioned above is known as
the Murray-Mackey theorem for the separable case. A few years later,
Lindenstrauss \cite{Lindenstrauss} extended Murray's results to show that
every subspace of $X$ is quasicomplemented if $X$ is only reflexive. Still in
\cite{Lindenstrauss}, Lindenstrauss asks whether or not $c_{0}$ is proper
quasicomplemented in $\ell_{\infty}$ and in \cite{L}, he proves that
$c_{0}\left(  \Gamma\right)  $ is not proper quasicomplemented in
$\ell_{\infty}\left(  \Gamma\right)  $ if $\Gamma$ is uncountable. In
\cite[Theorem 1.7]{Rosenthal} Rosenthal gave a positive answer to this
question showing that $c_{0}$ is proper quasicomplemented in $\ell_{\infty}$.
Also in \cite{Rosenthal}, a nonseparable extension of Murray-Mackey theorem
was provided by Rosenthal, who proved that a closed subspace $F$ of a Banach
space $X$ is quasicomplemented whenever the topological dual of $F$, denoted
by $F^{\ast}$, is $w^{\ast}$-separable and the annihilator of $F$ contains a
reflexive subspace.

The concepts defined and explored in this paper, among other things, end up
establishing a link between the theory of quasicomplemented subspaces and the
theory of lineability within the scope of spaceability. The term lineability
was coined by V.I. Gurariy \cite{AGSS,JBSS,aron} and since then this topic has
been deeply investigated. Let $X$ be a vector space, $M$ be a nonempty subset
of $X$ and $\alpha,\beta$ be cardinal numbers such that $\alpha\leq\beta
\leq\dim X$, where $\dim X$ denotes the cardinality of a Hamel basis of $X$.
We say that $M$ is $\alpha$-\textbf{lineable} if there is an $\alpha
$-dimensional subspace $W$ of $X$ such that%
\[
W\subset M\cup\left\{  0\right\}  \text{.}%
\]
When $X$ is endowed with a topology and the $\alpha$-dimensional subspace $W$
can be chosen closed we say that $M$ is $\alpha$-\textbf{spaceable}. Also in
the context in which $X$ is endowed with a topology, we say that $M$ is
$\left(  \alpha,\beta\right)  $-\textbf{spaceable} if it is $\alpha$-lineable
and for each $\alpha$-dimensional subspace $W_{\alpha}\subset M\cup\left\{
0\right\}  $ there is a $\beta$-dimensional closed subspace $W_{\beta}$ such
that
\[
W_{\alpha}\subset W_{\beta}\subset M\cup\left\{  0\right\}  \text{.}%
\]
A detailed account of $\left(  \alpha,\beta\right)  $-spaceability can be
found in \cite{Diogo/Anselmo, Diogo, Pilar, FPT, Pellegrino}.

\begin{definition}
Let $X$ be a topological vector space and let $\alpha$ be a cardinal number.
We say that a closed subspace $Y$ of $X$ is $\alpha$%
-\textbf{quasicomplemented} in $X$ if there is\textbf{ }a quasicomplement $Z$
to $Y$ in $X$ containing an $\alpha$-dimensional subspace $F$ such that $\dim
Z/F=\infty$.
\end{definition}

\begin{definition}
Let $X$ be a topological vector space and let $\alpha$ be a cardinal number.
We say that a closed subspace $Y$ of $X$ is \textbf{totally} $\alpha
$-\textbf{quasicomplemented} in $X$ if $Y$ is quasicomplemented in $X$ and for
each $\alpha$-dimensional subspace $F$ of $X$ with $F\cap Y=\left\{
0\right\}  $, there exists a quasicomplement $Z$ of $Y$ containing $F$ such
that $\dim Z/F=\infty$.
\end{definition}

In other words, we say that $Y$ is totally\textit{ }$\alpha$-quasicomplemented
in $X$ if for each $\alpha$-dimensional subspace $F$ of $X$ with $F\cap
Y=\left\{  0\right\}  $, there exists a closed subspace $Z$ of $X$ such that%
\[
F\subset Z\text{, \ \ \ \ }Y\cap Z=\left\{  0\right\}  \text{, \ \ \ \ }\dim
Z/F=\infty\text{ \ \ \ \ and \ \ \ \ }Y+Z\text{ is dense in }X\text{.}%
\]
In addition, if $Y+Z$ not is closed, we say that $Y$ is \textbf{properly
totally} $\alpha$-\textbf{quasicomplemented}.

Obviously, each totally $\alpha$-quasicomplemented subspace is also $\alpha
$-quasicomplemented. However, this notions are not equivalent. Indeed, let $X$
be an infinite dimensional Banach space with Schauder basis $\left(
x_{n}\right)  _{n=1}^{\infty}$ and let $Y:=\mathbb{K}x_{1}$ and $Z:=\overline
{\operatorname*{span}}\left\{  x_{n}\right\}  _{n=2}^{\infty}$ It is well
known that, in this circumstances, $X=Y+Z$ and $Y\cap Z=\left\{  0\right\}  $.
In this case, since $\dim Z=\dim X=\infty$ we can take $F:=\overline
{\operatorname*{span}}\left\{  x_{2n}\right\}  _{n=1}^{\infty}$ and obtain
that $\dim Z/F=\mathfrak{c}$ where $\mathfrak{c}$ denotes the cardinality of
the set $\mathbb{R}$. This show that $Y$ is $\mathfrak{c}$-quasicomplemented
in $X$. However, F\'{a}varo et al. ensures in \cite[Theorem 2.1]{Raposo} that,
in this case, the set $X\setminus Y$ is not $\left(  \alpha,\mathfrak{c}%
\right)  $-spaceable for $\aleph_{0}\leq\alpha\leq\mathfrak{c}$, where
$\aleph_{0}$ denotes the cardinality of the set $\mathbb{N}$ of positive
integers. This means that, if $\aleph_{0}\leq\alpha\leq\mathfrak{c}$, then $Y$
is not totally $\alpha$-quasicomplemented.

In this paper, if $X$ is a vector space and $Y$ is a subspace of $X$, the
\textbf{codimension} of $Y$, symbolically denoted by $\operatorname{codim}Y$,
will indicate the dimension of the quotient space $X/Y$.

This paper consists of three sections, in addition to this Introduction. In
the next section, we establish characterization results for $\alpha
$-quasicomplementation, which offer a form of extension to the theorems of
Murray, Mackay, and Lindenstrauss in the context of quasicomplemented
subspaces. In Section 3, we further establish characterization results for
$\alpha$-quasicomplementation, but now in the general context of topological
vector spaces. In the final section, our attention is directed towards some
observations within the scope of spaceability for complements of vector subspaces.

\section{Totally $\alpha$-quasicomplemented subspaces in Banach spaces}

\begin{problem}
\label{Prob1} Is every infinite dimensional quasicomplemented subspace $Y$ of
a Banach space $X$ totally $\alpha$-quasicomplemented for every $0<\alpha
<\aleph_{0}$?
\end{problem}

The first two results of this section, supported by the results of Murray,
Mackay, and Lindenstrauss for quasicomplemented subspaces and \cite[Theorem
2.1]{Raposo}, provide a kind of partial (\textit{positive}) solution to
Problem \ref{Prob1}, as well as establishing a connection between $\left(
\alpha,\beta\right)  $-spaceability and the theory of quasicomplements in the
context of Banach spaces.

For the sake of simplicity, we combine the results of Murray, Mackay, and
Lindenstrauss as follows:

\begin{theorem}
[Lindenstrauss--Mackey--Murray]\label{Mackey}If $X$ is a separable or
reflexive Banach space, then every closed subspace of $X$ is quasicomplemented.
\end{theorem}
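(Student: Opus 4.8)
The plan is to treat the two hypotheses in parallel once the problem is put into quotient form, since historically each is a separate theorem and each forces a somewhat different construction. First I would dispose of the trivial cases: if $Y$ is finite dimensional, or if $Y$ has finite codimension, then $Y$ is complemented (finite dimensional subspaces and finite codimensional closed subspaces are always complemented in a Banach space), and any complement is in particular a quasicomplement. So I may assume $Y$ is closed, infinite dimensional, and of infinite codimension. Writing $Q:=X/Y$ and $q:X\to Q$ for the quotient map, the key reformulation is that a closed subspace $Z\subseteq X$ is a quasicomplement of $Y$ precisely when $q|_{Z}$ is injective (this encodes $Y\cap Z=\{0\}$, since $\ker(q|_{Z})=Y\cap Z$) and has dense range (this encodes that $Y+Z$ is dense, since $q$ is open and continuous). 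Thus the whole theorem reduces to one task: \emph{produce a closed subspace $Z$ on which the quotient map $q$ is one-to-one with dense image.}

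It is worth recording the dual picture, since it is what makes the reflexive case structurally transparent. Passing to annihilators in $X^{\ast}$, a closed $Z$ is a quasicomplement of $Y$ if and only if $Y^{\perp}\cap Z^{\perp}=\{0\}$ and $Y^{\perp}+Z^{\perp}$ is $w^{\ast}$-dense in $X^{\ast}$; equivalently, $Z^{\perp}$ is a ``$w^{\ast}$-quasicomplement'' of $Y^{\perp}$, and every $w^{\ast}$-closed subspace of $X^{\ast}$ arises as such a $Z^{\perp}$. When $X$ is reflexive the weak and $w^{\ast}$ topologies coincide and annihilation is an involution, so the construction problem for $Y$ in $X$ becomes literally the same problem for the closed subspace $Y^{\perp}$ in the reflexive space $X^{\ast}$. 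This self-duality is the structural feature I would exploit, building the biorthogonal data on whichever side is convenient.

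For the actual construction I would build a biorthogonal system by induction (ordinary in the separable case, transfinite in the reflexive case). In the separable case $Q$ is separable, hence carries a fundamental total biorthogonal system (Markushevich basis) $(e_{n},e_{n}^{\ast})$; I would lift each $e_{n}$ to some $z_{n}\in X$ with $qz_{n}=e_{n}$ and set $Z:=\overline{\operatorname{span}}\{z_{n}\}$. Density of $q(Z)$ is then immediate because the $e_{n}$ are fundamental in $Q$, so $Y+Z$ is dense. The delicate point is to choose the lifts $z_{n}$ — by a gliding-hump/perturbation selection that also uses a fixed sequence of functionals total over all of $X$ (available since $X$ is separable) to control the $Y$-directions — so that no nonzero norm limit of finite combinations of the $z_{n}$ lands in $Y$; this is exactly what secures injectivity of $q|_{Z}$, i.e. $Y\cap Z=\{0\}$. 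In the reflexive case I would instead use that a reflexive space is weakly compactly generated and therefore admits a (possibly uncountable) Markushevich basis (Amir--Lindenstrauss), and run the analogous transfinite induction, using the weak compactness of bounded sets to pass consistently to the limit at limit ordinals.

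The main obstacle in both constructions is the injectivity condition $Y\cap Z=\{0\}$; the density $\overline{Y+Z}=X$ is comparatively routine once the fundamental system is in place. Note that a naive lift of a Markushevich basis of $Q$ does not by itself force injectivity, since the lifted coordinate functionals $e_{n}^{\ast}\circ q$ carry no information on the $Y$-directions inside $Z$; this is precisely why the selection must engage a total family of functionals on all of $X$. In the separable case the difficulty is the gliding-hump bookkeeping that keeps these lifted coordinates from degenerating on norm limits; in the non-separable reflexive case the difficulty shifts to the transfinite induction, where reflexivity (through weak compactness) is exactly what keeps the partial biorthogonal systems coherent across limit ordinals. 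These are the two places where the separability and the reflexivity hypotheses are respectively spent, and they correspond to the original arguments of Mackey and of Lindenstrauss building on Murray.
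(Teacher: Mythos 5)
First, a point of comparison: the paper does not prove this statement at all --- it is quoted as Theorem \ref{Mackey} purely as a combination of the classical results of Murray \cite{MURRAY}, Mackey \cite{MACKEY} and Lindenstrauss \cite{Lindenstrauss}, with the proofs left in those references. So your attempt can only be measured against the classical arguments, and against that standard it has a genuine gap: it is a roadmap that names the hard step but does not carry it out. Your correct contributions are the reduction to the case $\dim Y=\operatorname{codim}Y=\infty$, the reformulation that $Z$ is a quasicomplement iff $q|_{Z}$ is injective with dense range, and the dual dictionary ($\overline{Y+Z}=X \Leftrightarrow Y^{\perp}\cap Z^{\perp}=\{0\}$, and $Y\cap Z=\{0\} \Leftrightarrow Y^{\perp}+Z^{\perp}$ is $w^{\ast}$-dense). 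But the injectivity of $q|_{Z}$ --- which you yourself identify as ``the main obstacle'' --- is disposed of by an unspecified ``gliding-hump/perturbation selection,'' and the specific device you propose cannot work as described. To show that a norm limit $z\in Z$ lying in $Y$ must vanish, you need a family of functionals that simultaneously \emph{annihilate $Y$} (so that $z\in Y$ forces all coordinates of $z$ to vanish) and are \emph{total over $Z$} (so that vanishing coordinates force $z=0$). A ``fixed sequence of functionals total over all of $X$'' gives neither: functionals total over $X$ cannot all lie in $Y^{\perp}$ (members of $Y^{\perp}$ kill $Y\neq\{0\}$), and a functional outside $Y^{\perp}$ tells you nothing about an element of $Y\cap Z$. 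The classical separable proof (Mackey; see also Rosenthal \cite{Rosenthal}) instead constructs, by an interleaved induction exploiting the $w^{\ast}$-separability of $Y^{\perp}$, a biorthogonal system $(z_{n},f_{n})$ with $f_{n}\in Y^{\perp}$, $(f_{n})$ total over $\overline{\operatorname{span}}\{z_{n}\}$, and $(q z_{n})$ fundamental in $X/Y$; that simultaneous arrangement is the entire content of the theorem and is missing from your sketch.

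The reflexive case is in worse shape. The assertion that reflexive spaces are weakly compactly generated and admit Markushevich bases (Amir--Lindenstrauss) is true, but an M-basis of $X$ or of $X/Y$ is adapted in no way to the given subspace $Y$, so it does not by itself produce the required data (lifts fundamental modulo $Y$ together with functionals in $Y^{\perp}$ total over their closed span); and the claim that weak compactness ``keeps the partial biorthogonal systems coherent across limit ordinals'' is a declaration, not an argument --- nothing is said about what the transfinite system must satisfy, what happens to totality at limit stages, or how the density-character bookkeeping closes. This is also not the route of \cite{Lindenstrauss}, which runs a direct transfinite construction using reflexivity through the self-duality you correctly describe, rather than through WCG structure theory (which in any case postdates that paper). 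In short: the skeleton is sound and consistent with the classical proofs the paper cites, but both places where you say the hypotheses are ``spent'' are precisely the places where the proof is absent, and in the separable case the mechanism you do specify would fail.
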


The spaceability result used in our approach is stated as follows:

\begin{theorem}
\label{TeoFPRR}$($See \cite[Theorem 2.1]{Raposo}$)$ Let $\alpha\geq\aleph_{0}$
and $X$ be an $F$-space. Let $A$, $B$ be subsets of $X$ such that $A$ is
$\alpha$-lineable and $B$ is $1$-lineable. If $A\cap B=\varnothing$ and $A$ is
stronger than $B$ $($i.e. $A+B=\left\{  a+b:a\in A\text{ and }B\in B\right\}
\subset A)$, then $A$ is not $\left(  \alpha,\beta\right)  $-spaceable,
regardless of the cardinal number $\beta$.
\end{theorem}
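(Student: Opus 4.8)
The plan is to establish the negative conclusion directly by producing a \emph{single} obstruction: I will construct one $\alpha$-dimensional subspace $W_{\alpha}\subset A\cup\{0\}$ that is contained in \emph{no} closed subspace lying inside $A\cup\{0\}$. Since the definition of $(\alpha,\beta)$-spaceability requires that \emph{every} $\alpha$-dimensional $W_{\alpha}\subset A\cup\{0\}$ extend to a $\beta$-dimensional closed $W_{\beta}\subset A\cup\{0\}$, a single non-extendable $W_{\alpha}$ simultaneously rules out $(\alpha,\beta)$-spaceability for every cardinal $\beta$, which is exactly the ``regardless of $\beta$'' clause. First I would extract the two ingredients from the hypotheses. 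From the $1$-lineability of $B$ I obtain a nonzero $b$ with $\mathbb{K}b\setminus\{0\}\subset B$; since $A\cap B=\varnothing$ this forces $\lambda b\notin A$ for every $\lambda\neq 0$, so in particular $b\notin A\cup\{0\}$. From the $\alpha$-lineability of $A$ I fix an $\alpha$-dimensional subspace $S\subset A\cup\{0\}$, which is infinite-dimensional since $\alpha\geq\aleph_{0}$, and I record that $b\notin S$.

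Next I would arrange for $b$ to lie in the closure of $W_{\alpha}$ but not in $W_{\alpha}$ itself. Choose a Hamel basis $\{e_{\gamma}\}_{\gamma\in\Gamma}$ of $S$ with $|\Gamma|=\alpha$, single out a countable subfamily of indices $\gamma_{1},\gamma_{2},\dots\in\Gamma$, and write $e_{k}:=e_{\gamma_{k}}$. Using continuity of scalar multiplication in the $F$-space $X$, pick scalars $c_{k}$ so that $u_{k}:=e_{k}/c_{k}\to 0$, and set $w_{k}:=b+u_{k}$. The ``stronger'' hypothesis $A+B\subset A$ then gives $w_{k}=u_{k}+b\in A$ (since $u_{k}\in A$ and $b\in B$), while $w_{k}\to b$. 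I define
\[
W_{\alpha}:=\operatorname{span}\bigl(\{w_{k}:k\in\mathbb{N}\}\cup\{e_{\gamma}:\gamma\in\Gamma\setminus\{\gamma_{k}:k\in\mathbb{N}\}\}\bigr).
\]
Because $b\notin S$ and the $e_{\gamma}$ are linearly independent, this family is linearly independent, so $\dim W_{\alpha}=\alpha$; moreover $b\notin W_{\alpha}$ while $b=\lim_{k}w_{k}\in\overline{W_{\alpha}}$.

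The heart of the argument, and the step I expect to be the main obstacle, is verifying $W_{\alpha}\subset A\cup\{0\}$. Writing a generic element as $v=\sigma b+s$ with $\sigma\in\mathbb{K}$ and $s\in S$, I would run the case analysis: if $\sigma=0$ then $v=s\in S\subset A\cup\{0\}$; if $\sigma\neq 0$ and $s\neq 0$ then $v=s+\sigma b\in A+B\subset A$; and the only dangerous case, $\sigma\neq 0$ with $s=0$ (which would place $v=\sigma b\in B$ outside $A$), is excluded precisely because linear independence of the $u_{k}$ forces $\sigma=0$ whenever $s=0$. With $W_{\alpha}\subset A\cup\{0\}$ secured, the conclusion is immediate: any closed subspace $W_{\beta}\supset W_{\alpha}$ contains $\overline{W_{\alpha}}\ni b$, so $W_{\beta}\not\subset A\cup\{0\}$. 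Hence no admissible $W_{\beta}$ exists for this $W_{\alpha}$, and $A$ fails to be $(\alpha,\beta)$-spaceable for every cardinal $\beta$. The delicate bookkeeping lies entirely in coordinating the independence of $\{u_{k}\}$ with the ``stronger'' property, so that nonzero scalar multiples of $b$ never leak into $W_{\alpha}$ while $b$ is nonetheless admitted into its closure.
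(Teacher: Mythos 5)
Your proposal is correct and follows essentially the same route as the source: the paper itself states this theorem without proof, citing \cite[Theorem 2.1]{Raposo}, and the argument there is precisely your construction --- fix $b\neq 0$ with $\mathbb{K}b\setminus\{0\}\subset B$ and an $\alpha$-dimensional $S\subset A\cup\{0\}$, rescale countably many basis vectors so that $u_{k}\to 0$ (using metrizability of the $F$-space), and replace them by $w_{k}=b+u_{k}$, so that $b\in\overline{W_{\alpha}}\setminus\left(A\cup\{0\}\right)$ while the case analysis $v=\sigma b+s$ (with $s=0$ forcing $\sigma=0$ by linear independence) keeps $W_{\alpha}\subset A\cup\{0\}$, ruling out any closed $W_{\beta}$ for every $\beta$. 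Your bookkeeping at each step (linear independence, $\dim W_{\alpha}=\alpha$, and the use of $A+B\subset A$ with $\sigma b\in B$) is sound, so there is nothing to flag.
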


An important ingredient in the proof of the Theorem \ref{Theorem MM} lies in
this\emph{ }simple and useful remark:

\begin{remark}
\label{Lemma para soma} Let $X$ be a Hausdorff topological vector space. If
$Y$ and $F$ are subspaces of $X$ such that $Y$ is closed in $X$ and $F$ is
finite dimensional, then $Y+F$ is closed in $X$. In fact, let $Q\colon
X\longrightarrow X/Y$ be the quotient map of $X$ onto $X/Y$. Since $F$ is
finite dimensional the subspace $Q\left(  F\right)  $ has finite dimension.
This implies that $Q\left(  F\right)  $ is closed in $X/Y$. Since
$Y+F=Q^{-1}\left(  Q\left(  F\right)  \right)  $ and $Q$ is continuous, we get
that $Y+F$ is closed in $X$.
\end{remark}

We are now able to state the first result of this section.

\begin{theorem}
\label{Theorem MM}If $X$ is a separable or reflexive Banach space, then every
infinite codimensional closed subspace of $X$ is totally$\mathit{\ }\alpha
$-quasicomplemented if, and only if, $\alpha<\aleph_{0}$.
\end{theorem}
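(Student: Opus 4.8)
The statement is a biconditional, so I would prove the two implications separately, using Theorem \ref{Mackey} (Lindenstrauss--Mackey--Murray) and Remark \ref{Lemma para soma} for the ``if'' direction, together with the spaceability obstruction of Theorem \ref{TeoFPRR} to interpret the ``only if'' direction. Throughout, fix an infinite codimensional closed subspace $Y$ of $X$ and write $Q\colon X \to X/Y$ for the quotient map; since $X$ is separable (resp. reflexive), $X/Y$ is an infinite dimensional separable (resp. reflexive) Banach space, so $\operatorname{codim} Y = \dim(X/Y)$ is infinite. I shall tacitly work in the non-vacuous range $\alpha \leq \operatorname{codim} Y$, since for $\alpha > \operatorname{codim} Y$ there is no $\alpha$-dimensional $F$ with $F \cap Y = \{0\}$ to test.

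First I would show that $\alpha < \aleph_0$ implies $Y$ is totally $\alpha$-quasicomplemented. Let $F$ be finite dimensional with $F \cap Y = \{0\}$. By Remark \ref{Lemma para soma} the subspace $Y + F$ is closed, and because $Q(F)$ is finite dimensional we get $\operatorname{codim}(Y+F) = \operatorname{codim} Y \geq \aleph_0$. Thus $Y+F$ is again an infinite codimensional closed subspace of $X$, and Theorem \ref{Mackey} supplies a quasicomplement $Z'$ of $Y+F$, i.e. a closed subspace with $(Y+F) \cap Z' = \{0\}$ and $(Y+F)+Z'$ dense. I would then set $Z := F + Z'$, which is closed by Remark \ref{Lemma para soma} since $F$ is finite dimensional. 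One checks directly that $F \cap Z' = \{0\}$ (as $F \subseteq Y+F$), that $Y \cap Z = \{0\}$, and that $Y + Z = (Y+F)+Z'$ is dense; moreover $Z/F \cong Z'$, and $Z'$ must be infinite dimensional (otherwise $(Y+F)+Z'$ would be closed and dense, hence equal to $X$, forcing $\operatorname{codim}(Y+F)<\infty$), so $\dim Z/F = \infty$. Hence $Z$ is a quasicomplement of $Y$ containing $F$ with $\dim Z/F = \infty$.

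For the converse I would argue the contrapositive: if $\alpha \geq \aleph_0$ then $Y$ is not totally $\alpha$-quasicomplemented, and it suffices to exhibit a single ``bad'' $\alpha$-dimensional $F$ with $F \cap Y = \{0\}$ that is contained in no quasicomplement of $Y$ whatsoever. Fix $y_0 \in Y$ with $y_0 \neq 0$. Using $\dim(X/Y) = \operatorname{codim} Y \geq \alpha$, choose a linearly independent family $\{\eta_i\}_{i \in I}$ in $X/Y$ with $\mathbb{N} \subseteq I$ and $|I| = \alpha$, arranged so that the countable subfamily $\{\eta_n\}_{n\in\mathbb N}$ has lifts $g_n \in X$ (i.e. $Q(g_n)=\eta_n$) with $g_n \to 0$, which is achieved by first rescaling these basis vectors to have small quotient norm. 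Lift the remaining $\eta_i$ to arbitrary $g_i \in X$ and put
\[
F := \operatorname{span}\left(\{\, y_0 + g_n : n \in \mathbb{N}\,\} \cup \{\, g_i : i \in I \setminus \mathbb{N}\,\}\right).
\]
Since $Q$ sends this generating set to the linearly independent family $\{\eta_i\}_{i\in I}$, the map $Q|_F$ is injective, so $F \cap Y = \{0\}$ and $\dim F = \alpha$; on the other hand $y_0 + g_n \to y_0$, so $y_0 \in \overline{F}$ with $y_0 \neq 0$.

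Finally I would conclude: if $Z$ were any quasicomplement of $Y$ with $F \subseteq Z$, then closedness of $Z$ would force $y_0 \in \overline{F} \subseteq Z$, whence $y_0 \in Z \cap Y = \{0\}$, a contradiction; hence no quasicomplement of $Y$ contains $F$, and $Y$ is not totally $\alpha$-quasicomplemented. This is exactly the phenomenon recorded abstractly by Theorem \ref{TeoFPRR}: taking $A = X \setminus Y$ and $B = Y \setminus \{0\}$ one has $A \cap B = \varnothing$, $A + B \subseteq A$, $B$ is $1$-lineable and $A$ is $\alpha$-lineable, so $X \setminus Y$ is not $(\alpha,\beta)$-spaceable for any $\beta$; the subspace $F$ built above is a concrete $\alpha$-dimensional $W_\alpha \subseteq (X\setminus Y)\cup\{0\}$ whose closure exits $(X\setminus Y)\cup\{0\}$, which is precisely the obstruction. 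The step I expect to demand the most care is the construction of $F$: one must simultaneously guarantee $\dim F = \alpha$, $F \cap Y = \{0\}$, and $y_0 \in \overline{F}$, and the cleanest way to reconcile these is to organize everything through the quotient $X/Y$ and to concentrate the convergence on a countable subfamily by rescaling, which is exactly where the hypothesis $\operatorname{codim} Y \geq \aleph_0$ is used.
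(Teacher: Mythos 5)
Your proposal is correct, and your ``if'' direction is essentially identical to the paper's: close up $Y+F$ via Remark \ref{Lemma para soma}, apply Theorem \ref{Mackey} to $Y+F$, and take $Z=F+Z'$ (the paper writes $W=F+Z$); you even supply a small justification the paper leaves tacit, namely that $Z'$ must be infinite dimensional (else $(Y+F)+Z'$ would be closed, dense, hence all of $X$, contradicting $\operatorname{codim}(Y+F)=\infty$). Where you genuinely diverge is the converse. The paper disposes of it in one line by citing Theorem \ref{TeoFPRR} (the result of F\'{a}varo, Pellegrino, Raposo Jr.\ and Ribeiro), whereas you inline the construction: fix $y_{0}\in Y\setminus\{0\}$, lift a linearly independent family from $X/Y$ choosing the countable part with lifts $g_{n}\to 0$, and perturb by $y_{0}$ to get an $\alpha$-dimensional $F$ with $F\cap Y=\{0\}$ but $y_{0}\in\overline{F}$, so that \emph{no} closed $Z\supseteq F$ can satisfy $Z\cap Y=\{0\}$. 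This is precisely the mechanism underlying Theorem \ref{TeoFPRR} (with $A=X\setminus Y$, $B=Y\setminus\{0\}$), so conceptually the routes coincide, but your version buys two things. First, self-containedness. Second, it is actually slightly more airtight than the citation: totally $\alpha$-quasicomplementation allows the dimension of the quasicomplement $Z$ to vary with $F$, while $(\alpha,\beta)$-spaceability fixes a single $\beta$ for all $W_{\alpha}$, so deducing the converse from the bare \emph{statement} of Theorem \ref{TeoFPRR} requires a word of care (what one really needs is the existence of a single bad $W_{\alpha}$ contained in no closed subspace inside $(X\setminus Y)\cup\{0\}$ of any dimension, which is what the proof of that theorem, and your explicit $F$, provide). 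Your remark restricting to the non-vacuous range $\alpha\le\operatorname{codim}Y$ is also a legitimate refinement: for $\alpha>\operatorname{codim}Y$ there is no admissible test subspace $F$, an edge case the paper's statement glosses over.
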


\begin{proof}
Fix $\alpha<\aleph_{0}$ and let $Y$ be an infinite codimensional closed
subspace of $X$. Let $F$ be an $\alpha$-dimensional subspace of $X$ such that
$F\cap Y=\left\{  0\right\}  $. Since $Y+F$ is closed in $X$ (see Remark
\ref{Lemma para soma}) and $\operatorname{codim}\left(  Y+F\right)
=\operatorname{codim}Y=\infty$, we can invoke Theorem \ref{Mackey} to obtain a
quasicomplement $Z$ to $Y+F$ in $X$. Taking%
\[
W:=F+Z\text{,}%
\]
and using Remark \ref{Lemma para soma} again, we\ can infer that $W$ is
closed. Moreover, it is plain that
\[
W\cap Y=\left\{  0\right\}  \text{, \ }F\subset W\text{, \ }\dim W/F=\dim
Z=\infty\text{ \ and \ }Y+W=\left(  Y+F\right)  +Z\text{ is dense in
}X\text{.}%
\]
Therefore, $Y$ is totally $\alpha$-quasicomplemented. The converse is an
immediate consequence of Theorem \ref{TeoFPRR}.
\end{proof}

\begin{theorem}
Let $X$ be a Banach space. If $Y$ is an infinite codimensional closed subspace
of $X$ such that $X/Y$ is separable, then $Y$ is totally$\mathit{\ }\alpha
$-quasicomplemented if,\ and only if, $\alpha<\aleph_{0}$.
\end{theorem}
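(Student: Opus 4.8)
The plan is to reuse the skeleton of the proof of Theorem \ref{Theorem MM}: for the nontrivial implication I fix $\alpha<\aleph_{0}$ and an $\alpha$-dimensional subspace $F$ with $F\cap Y=\{0\}$, and I try to produce a quasicomplement $Z$ to $M:=Y+F$ in $X$; the vector $W:=F+Z$ will then finish the argument verbatim as before. The obstruction is that now $X$ is neither separable nor reflexive, so Theorem \ref{Mackey} cannot be invoked on $X$ itself. The idea is to apply it instead to a conveniently chosen \emph{separable} closed subspace $S\subset X$ that still ``sees'' the whole quotient $X/Y$, and to read off a quasicomplement of $M$ from a quasicomplement computed inside $S$.

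Concretely, let $Q\colon X\to X/Y$ be the quotient map and choose a sequence $(s_{n})$ in $X$ whose images $(Q(s_{n}))$ are dense in the separable space $X/Y$; fixing a basis $b_{1},\dots,b_{\alpha}$ of $F$, I set $S:=\overline{\operatorname{span}}(\{s_{n}\}\cup\{b_{1},\dots,b_{\alpha}\})$. Then $S$ is separable and closed, $F\subset S$, and since $Q(S)$ is dense in $X/Y$ a routine estimate (using $\|Q(x-s)\|=\operatorname{dist}(x-s,Y)$ to approximate any $x$ by elements $s+y$) shows that $Y+S$ is dense in $X$. Next I observe that $M\cap S=(Y\cap S)+F$ is closed in $S$ by Remark \ref{Lemma para soma}, so, $S$ being separable, Theorem \ref{Mackey} furnishes a quasicomplement $Z$ to $M\cap S$ inside $S$: thus $Z$ is closed in $S$ (hence in $X$), $(M\cap S)\cap Z=\{0\}$, and $(M\cap S)+Z$ is dense in $S$.

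It then remains to promote $Z$ to a quasicomplement of $M$ in all of $X$. Since $Z\subset S$ one gets $M\cap Z=(M\cap S)\cap Z=\{0\}$ at once. For density I would verify $Y+S\subset\overline{M+Z}$: given $y+s$ with $y\in Y$ and $s\in S$, I approximate $s$ by elements of $(Y\cap S)+F+Z$ and add $y$, landing in $Y+F+Z=M+Z$; as $Y+S$ is dense, so is $M+Z$. A codimension count gives $\dim Z=\infty$ (if $\dim Z<\infty$ then $M+Z$ would be closed by Remark \ref{Lemma para soma} and, being dense, equal to $X$, forcing $\operatorname{codim}M=\dim Z<\infty$, contrary to $\operatorname{codim}M=\operatorname{codim}Y=\infty$). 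Finally $W:=F+Z$ is closed with $F\subset W$, $W\cap Y=\{0\}$, $\dim W/F=\dim Z=\infty$ and $Y+W=M+Z$ dense, exactly as in Theorem \ref{Theorem MM}; hence $Y$ is totally $\alpha$-quasicomplemented. For the converse, if $\alpha\geq\aleph_{0}$ then total $\alpha$-quasicomplementation would provide, for each $\alpha$-dimensional $F\subset(X\setminus Y)\cup\{0\}$, a closed infinite-dimensional $Z\supset F$ contained in $(X\setminus Y)\cup\{0\}$, i.e. it would make $X\setminus Y$ be $(\alpha,\beta)$-spaceable for some $\beta$; this contradicts Theorem \ref{TeoFPRR} applied to $A=X\setminus Y$ and $B=Y\setminus\{0\}$, exactly as in Theorem \ref{Theorem MM}.

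The step I expect to be the crux is the passage from ``$Z$ quasicomplements $M\cap S$ in $S$'' to ``$Z$ quasicomplements $M$ in $X$''; everything hinges on having arranged $S$ so that $Y+S$ is dense in $X$, which is precisely what lifts the density of $(M\cap S)+Z$ from $S$ up to $X$.
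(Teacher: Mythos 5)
Your proof is correct and takes essentially the same route as the paper: both pass to a separable closed subspace (your $S$, the paper's $W_{1}$) spanned by lifts of a dense sequence in $X/Y$ together with a basis of $F$, quasicomplement the problematic intersection with $Y$ inside that subspace via the separable case, and lift the density back to $X$ by the same triangle-inequality argument, concluding with the same codimension count for $\dim Z=\infty$ and the same appeal to Theorem \ref{TeoFPRR} for the converse. The only immaterial differences are that the paper invokes its Theorem \ref{Theorem MM} inside $W_{1}$ to produce $W_{2}\supset F$ in one step, whereas you apply the plain Lindenstrauss--Mackey--Murray theorem to $\left(  Y+F\right)  \cap S$ and adjoin $F$ afterwards (thereby inlining the trick from the proof of Theorem \ref{Theorem MM}), and that you prove the density of $Y+S$ directly from the quotient norm where the paper uses the open mapping theorem.
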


\begin{proof}
Fix $\alpha<\aleph_{0}$ and let $F$ be an $\alpha$-dimensional subspace of $X$
such that
\[
F\cap Y=\left\{  0\right\}  \text{.}%
\]
Let $D=\left\{  d_{n}:n\in\mathbb{N}\right\}  $ be a dense subset of $X/Y$.
Without loss of generality we can consider that $d_{n}\neq0$ for every
$n\in\mathbb{N}$. Since the quotient map $Q\colon X\longrightarrow X/Y$ is
surjective, for each $n\in\mathbb{N}$ there is $e_{n}\in X\setminus Y$ such
that $Q\left(  e_{n}\right)  =d_{n}$. Let%
\[
W_{1}:=\overline{\operatorname*{span}}(\left\{  e_{n}:n\in\mathbb{N}\right\}
\cup\mathcal{B})\text{,}%
\]
where $\mathcal{B}$ is a Hamel basis to $F$. We claim that
\[
Y+W_{1}\text{ is dense in }X\text{.}%
\]
In fact, fix $x\in X$. Since $D$ is dense in $X/Y$ there is a sequence
$\left(  d_{n_{k}}\right)  _{k=1}^{\infty}$ in $D$ such that
\[
d_{n_{k}}-Q\left(  x\right)  \overset{k\rightarrow\infty}{\longrightarrow
}0\text{.}%
\]
It follows from open mapping theorem, passing to a subsequence of $\left(
d_{n_{k}}\right)  _{k=1}^{\infty}$ if necessary, that there is a sequence
$z_{_{k}}\in X$ such that%
\[
Q\left(  z_{k}\right)  =d_{n_{k}}-Q\left(  x\right)  \text{ \ \ \ \ and
\ \ \ \ }z_{k}\overset{k\rightarrow\infty}{\longrightarrow}0\text{.}%
\]
Thus the sequence $x_{k}:=z_{k}+x\in X$ is such that
\[
Q\left(  x_{k}\right)  =d_{n_{k}}\text{ \ \ \ \ and \ \ \ \ }x_{k}%
=z_{k}+x\overset{k\rightarrow\infty}{\longrightarrow}x\text{.}%
\]
In particular,%
\[
Q\left(  x_{k}\right)  =d_{n_{k}}=Q\left(  e_{n_{k}}\right)
\]
and, therefore,
\[
x_{k}-e_{n_{k}}\in Y\text{ \ \ \ \ and \ \ \ \ }x_{k}=\left(  x_{k}-e_{n_{k}%
}\right)  +e_{n_{k}}\in Y+W_{1}\text{.}%
\]
If $W_{1}\cap Y=\left\{  0\right\}  $ the result is done since $F\subset
W_{1}$ and $\dim W_{1}/F=\infty$. Otherwise, let
\[
N_{1}:=W_{1}\cap Y\text{.}%
\]
Note that $N_{1}$ is closed in $W_{1}$ and $N_{1}\cap F=\left\{  0\right\}  $.
Since $W_{1}$ is separable, we can invoke the Theorem \ref{Theorem MM} to
obtain a quasicomplement $W_{2}$ to $N_{1}$ in $W_{1}$ such that
\[
F\subset W_{2}\text{.}%
\]
Since
\[
W_{2}\cap Y=\left(  W_{2}\cap W_{1}\right)  \cap Y=W_{2}\cap\left(  W_{1}\cap
Y\right)  =W_{2}\cap N_{1}=\left\{  0\right\}  \text{,}%
\]
all we need to do is prove that $Y+W_{2}$ is dense in $X$ and $\dim
W_{2}/F=\infty$. \ First, let us show that $Y+W_{2}$ is dense in $X$. In fact,
pick $v\in X\setminus\left\{  0\right\}  $. For $\varepsilon>0$ there are
$w\in W_{1}$ and $y\in Y$ such that $\left\Vert \left(  w+y\right)
-v\right\Vert <\varepsilon/2$. Furthermore, since $N_{1}+W_{2}$ is dense
$W_{1}$, there are $z\in W_{2}$ and $u\in N_{1}$ such that $\left\Vert \left(
z+u\right)  -w\right\Vert <\varepsilon/2$. Thus
\[
\left\Vert z+u+y-v\right\Vert \leq\left\Vert \left(  z+u\right)  -w\right\Vert
+\left\Vert \left(  w+y\right)  -v\right\Vert <\varepsilon\text{.}%
\]
Since $u+y\in Y$ we conclude that $Y+W_{2}$ is dense in $X$. Now, we will show
that $\dim W_{2}/F=\infty$. Indeed, since $Y+W_{2}$ is dense in $X$, if $\dim
W_{2}<\infty$ then $\dim X/Y<\infty$ because the subspace $W_{2}$ is
isomorphic to $X/Y$. But $\dim X/Y=\infty$. Therefore, $\dim W_{2}=\infty$ and
consequently $\dim W_{2}/F=\infty$. The converse, as before, is an immediate
consequence of Theorem \ref{TeoFPRR}.
\end{proof}

\bigskip

The subsequent result presented in this section represents a nuanced extension
of the projection theorem found in \cite[Theorem 3.3-4]{Kreyszig}.
Importantly, this result offers conditions for a closed subspace to be
$\alpha$-quasicomplemented, with '$\alpha$' inclusively representing an
infinite cardinal.

\begin{theorem}
Let $H$ be a Hilbert space and $Y\subset H$ be a closed subspace of $H$. If
$W$ is a subspace of $H$ orthogonal to $Y$ and
\[
\operatorname{codim}\left(  Y+W\right)  =\infty\text{,}%
\]
then $Y$ has a quasicomplement $Z$ $($not necessarily identical to the
orthogonal complement of $Y)$ containing $W$ and so that $\dim Z/W=\infty$. In
other words, $Y$ is $\dim W$-quasicomplemented in $H$.
\end{theorem}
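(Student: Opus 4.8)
The plan is to show that the orthogonal complement $Y^{\perp}$ itself already serves as the desired quasicomplement, so the whole argument reduces to a dimension count. Since $W$ is orthogonal to $Y$ we have $W\subset Y^{\perp}$, and the projection theorem \cite[Theorem 3.3-4]{Kreyszig} yields the orthogonal decomposition $H=Y\oplus Y^{\perp}$. In particular $Y^{\perp}$ is closed, $Y\cap Y^{\perp}=\left\{0\right\}$ and $Y+Y^{\perp}=H$ is (trivially) dense in $H$, so $Y^{\perp}$ is a quasicomplement of $Y$ containing $W$. It therefore only remains to verify that $\dim Y^{\perp}/W=\infty$, after which $Z:=Y^{\perp}$ does the job.

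The key step is to transfer the codimension hypothesis to the quotient $Y^{\perp}/W$. Let $P\colon H\longrightarrow Y^{\perp}$ denote the orthogonal projection onto $Y^{\perp}$. Then $P$ is a surjective linear map with $\ker P=Y$, and since $W\subset Y^{\perp}$ one checks that $P(Y+W)=W$ and $P^{-1}(W)=Y+W$. Consequently $P$ induces a vector space isomorphism
\[
H/(Y+W)\ \cong\ Y^{\perp}/W\text{,}
\]
whence $\dim Y^{\perp}/W=\operatorname{codim}(Y+W)=\infty$ by hypothesis. Taking $Z:=Y^{\perp}$ we obtain a closed subspace with $W\subset Z$, $Y\cap Z=\left\{0\right\}$, $Y+Z$ dense in $H$ and $\dim Z/W=\infty$, which is precisely the assertion that $Y$ is $\dim W$-quasicomplemented in $H$.

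The only genuinely delicate point is the bookkeeping behind the isomorphism $H/(Y+W)\cong Y^{\perp}/W$: one must confirm that $P^{-1}(W)$ equals exactly $Y+W$ (using $W\subset Y^{\perp}$ together with $\ker P=Y$) and not something larger, so that the map induced on quotients is a bijection. This is where the orthogonality of $W$ to $Y$ is essential. I would also append a short remark clarifying the parenthetical clause of the statement: although $Y^{\perp}$ already works, a \emph{proper} quasicomplement can be manufactured when desired by splitting $Y^{\perp}=\overline{W}\oplus V^{\perp}$ with $V:=\overline{Y+W}$ and assembling $Z$ from $W$ together with a suitable infinite dimensional subspace, which will in general differ from $Y^{\perp}$; the identity $\operatorname{codim}(Y+W)=\dim\left(\overline{W}/W\right)+\dim V^{\perp}$ shows that at least one of these two pieces is infinite dimensional and so supplies the room needed to guarantee $\dim Z/W=\infty$.
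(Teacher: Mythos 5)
Your proof is correct, but it takes a genuinely different (and shorter) route than the paper. The paper first shows that $Y+W$ is closed via Kober's theorem \cite[Theorem 1]{Kober}, using the orthogonality estimate $\left\Vert x+y\right\Vert _{H}^{2}\geq\left\Vert x\right\Vert _{H}^{2}$ (and tacitly the closedness of $W$, which is assumed in the abstract though not in the statement); it then applies the projection theorem \cite[Theorem 3.3-4]{Kreyszig} to $Y+W$ to produce $F:=\left(  Y+W\right)  ^{\perp}$, which is infinite dimensional by the codimension hypothesis, and sets $Z:=W+F$, obtaining $\dim Z/W=\dim F=\infty$ from $W\cap F=\left\{  0\right\}  $. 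You instead take $Z:=Y^{\perp}$ outright and transfer the hypothesis through the purely algebraic isomorphism $H/\left(  Y+W\right)  \cong Y^{\perp}/W$ induced by the orthogonal projection $P$ onto $Y^{\perp}$; your bookkeeping ($P\left(  Y+W\right)  =W$ and $P^{-1}\left(  W\right)  =Y+W$, using $W\subset Y^{\perp}$ and $\ker P=Y$) is exactly right. This buys two things: it bypasses Kober's theorem entirely, and it never needs $W$ or $Y+W$ to be closed, so it proves the statement as literally written, where $W$ is only assumed to be a subspace (the paper's appeal to Kober would fail for non-closed $W$, e.g.\ $Y=\left\{  0\right\}  $ and $W$ a dense subspace of infinite algebraic codimension). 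Amusingly, the two constructions produce the same subspace: since the paper's $Z=W+F$ lies inside $Y^{\perp}$ and satisfies $Y+Z=H$, one necessarily has $Z=Y^{\perp}$, so the parenthetical ``not necessarily identical to the orthogonal complement of $Y$'' is not actually exercised by either argument, and in both cases $Z$ is a genuine complement rather than a proper quasicomplement. One caution: your closing remark about manufacturing a \emph{proper} quasicomplement containing $W$ is only a sketch --- the decomposition $Y^{\perp}=\overline{W}\oplus V^{\perp}$ with $V:=\overline{Y+W}$ and the dimension identity are fine, but producing a closed $Z\supset W$ with $Y+Z$ dense and proper is the substance of the Murray--Mackey circle of results and does not follow from what you wrote; since it is extraneous to the theorem, either develop it fully or drop it.
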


\begin{proof}
Let $\left\langle x,y\right\rangle $ be the inner product in $H$ such that
\[
\left\Vert x\right\Vert _{H}:=\left\langle x,x\right\rangle ^{\frac{1}{2}%
}\text{, for every }x\in H\text{.}%
\]
Since $W$ is orthogonal to $Y$ we get
\[
\left\Vert x+y\right\Vert _{H}^{2}=\left\Vert x\right\Vert _{H}^{2}+\left\Vert
y\right\Vert _{H}^{2}\geq\left\Vert x\right\Vert _{H}^{2}\text{ for every
}x\in W\text{ and }y\in Y\text{.}%
\]
By \cite[Theorem 1]{Kober}, $Y+W$ is closed in $H$. According to \cite[Theorem
3.3-4]{Kreyszig}, there exists a closed subspace $F$ in $H$ such that%
\[
F\text{ is orthogonal to }Y+W\text{, \ \ \ \ }\left(  Y+W\right)  \cap
F=\left\{  0\right\}  \text{ \ \ \ \ and \ \ \ \ }\left(  Y+W\right)
+F=H\text{.}%
\]
Since $\operatorname{codim}\left(  Y+W\right)  =\infty$, the subspace $F$ is
infinite dimensional. Moreover, since $F$ is orthogonal to $Y+W$ and $W$ is
orthogonal to $Y$ we easily conclude that $Y$ is orthogonal to $W+F$. Due to
the fact that $W$ is orthogonal to $F$ we can infer that again by
\cite[Theorem 1]{Kober} that $W+F$ is closed. Since
\[
Y\cap\left(  W+F\right)  =\left\{  0\right\}  \ \ \ \ \ \text{and
\ \ \ \ }Y+\left(  W+F\right)  =H
\]
and, furthermore,%
\[
W\cap F=\left\{  0\right\}  \text{,}%
\]
the result is done, if we take $Z:=W+F$.
\end{proof}

\bigskip

\section{Totally $\alpha$-quasicomplemented subspaces in Hausdorff topological
vector spaces}

In this section, we provide a result that also offers a partial and positive
solution to Problem \ref{Prob1}, now in a more general context.

\begin{lemma}
\label{l2.2}Let $X$ be a Hausdorff topological vector space. If $X=Y\oplus F$
and $F$ is finite dimensional then $X$ is isomorphic to $Y\times F$.
\end{lemma}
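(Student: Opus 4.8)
The plan is to show that the obvious algebraic isomorphism between $Y\times F$ and $X$ is actually a topological one. Define $\phi\colon Y\times F\longrightarrow X$ by $\phi(y,f)=y+f$. Since $X=Y\oplus F$ as vector spaces, $\phi$ is a linear bijection, and since addition is continuous in every topological vector space, $\phi$ is continuous when $Y\times F$ carries the product topology. Thus the whole problem reduces to proving that $\phi^{-1}$ is continuous. Writing each $x\in X$ uniquely as $x=P(x)+Q(x)$ with $P(x)\in Y$ and $Q(x)\in F$, we have $\phi^{-1}(x)=(P(x),Q(x))$, so it suffices to show that the projections $P\colon X\to Y$ and $Q\colon X\to F$ are continuous; and since $P=\mathrm{id}_X-Q$, it is enough to treat $Q$.

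To analyze $Q$, I would fix a basis $f_1,\dots,f_n$ of the finite-dimensional subspace $F$ and write $Q(x)=\sum_{i=1}^n\lambda_i(x)\,f_i$, where the $\lambda_i\colon X\to\mathbb{K}$ are the linear coordinate functionals determined by the decomposition. Because a linear combination of continuous scalar functionals with fixed vector coefficients is continuous, continuity of $Q$ is equivalent to continuity of each $\lambda_i$.

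The key step is to produce this continuity, and here I would pass to the quotient. Let $\pi\colon X\to X/Y$ be the quotient map, which is continuous and open. Since $Y$ is closed, $X/Y$ is a Hausdorff topological vector space, and the restriction $\pi|_F$ is a linear isomorphism of $F$ onto $X/Y$ (injective because $F\cap Y=\{0\}$, surjective because $X=Y+F$); hence $X/Y$ is an $n$-dimensional Hausdorff topological vector space. By the classical theorem that every finite-dimensional Hausdorff topological vector space is linearly homeomorphic to $\mathbb{K}^n$, every linear functional on $X/Y$ is continuous. If $\mu_i$ denotes the coordinate functional on $X/Y$ relative to the basis $\pi(f_1),\dots,\pi(f_n)$, then from $\pi(x)=\sum_j\lambda_j(x)\,\pi(f_j)$ one gets $\lambda_i=\mu_i\circ\pi$, so each $\lambda_i$ is continuous as a composition of continuous maps. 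Consequently $Q$, and then $P=\mathrm{id}_X-Q$, are continuous, so $\phi^{-1}$ is continuous and $\phi$ is the desired isomorphism.

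The hard part will be exactly this continuity of the projections: the algebraic splitting is free, but continuity is not automatic, and it is precisely where the finite-dimensionality of $F$ (through the finite-dimensional Hausdorff TVS theory) and the closedness of $Y$ (ensuring that the quotient $X/Y$ is Hausdorff) enter. Everything else---linearity, bijectivity, and the continuity of $\phi$ itself---is routine.
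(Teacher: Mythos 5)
Your proof is correct and takes essentially the same route as the paper's: both pass to the quotient map $\pi\colon X\to X/Y$, use the classical theorem that a finite-dimensional Hausdorff topological vector space is topologically isomorphic to $\mathbb{K}^{n}$ (so that $\pi|_{F}$ is a topological isomorphism), and deduce continuity of the inverse of the canonical map $(y,f)\mapsto y+f$ --- the paper via a net argument with $(\pi|_{F})^{-1}$, you via the equivalent (and slightly more roundabout) reduction to coordinate functionals, since your projection $Q$ is just $(\pi|_{F})^{-1}\circ\pi$. One remark: like the paper's own proof, yours tacitly uses that $Y$ is closed (needed for $X/Y$ to be Hausdorff, and in fact necessary for the lemma, since the canonical isomorphism forces $Y=\ker Q$ to be closed), a hypothesis omitted from the lemma's statement but satisfied in every application the paper makes of it.
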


\begin{proof}
Let $T\colon Y\times F\longrightarrow X$ be a map given by $T(u,v)=u+v$. Since
$X=Y\oplus F$ the map $T$ is an algebraic isomorphism. Furthermore, $T$ is
continuous since it is the restriction of the addition operation in $X$. Let
us show that the inverse of $T$ is also continuous. First note that the
quotient map restriction $Q|_{F}\colon F\rightarrow X/Y$ is a topological
isomorphism (see \cite[Theorem 1.21]{Rudin}). Moreover, if $(u_{\lambda
}+v_{\lambda})_{\lambda\in\Lambda}$ is a net in $X$ converging to $u+v$, then
we have%
\[
Q(v_{\lambda})=Q(u_{\lambda}+v_{\lambda})\overset{\lambda}{\longrightarrow
}Q(u+v)=Q(v)\text{.}%
\]
Since $(Q|_{F})^{-1}$ is continuous, we conclude that $v_{\lambda}%
\overset{\lambda}{\longrightarrow}v$. Thus%
\[
u_{\lambda}=\left(  u_{\lambda}+v_{\lambda}\right)  -v_{\lambda}%
\overset{\lambda}{\longrightarrow}\left(  u+v\right)  -v=u\text{.}%
\]
Therefore, the result is done.
\end{proof}

\begin{theorem}
\label{X}Let $X$ be a\emph{ }Hausdorff\emph{ }locally convex space and
$Y\subset X$ be a subspace isomorphic to $\mathbb{K}^{I}$ for some set $I$.
Then $Y$ is closed in $X$, and furthermore, $Y\text{ is }$totally $\alpha
$-quasicomplemented in $X$ for all $\alpha<\min\{\aleph_{0},\dim X/Y\}$.
\end{theorem}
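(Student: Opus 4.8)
The plan is to establish, in order, that $Y$ is closed, that $Y$ is in fact \emph{topologically} complemented in $X$, and then to adjust the complement so as to swallow a prescribed finite-dimensional subspace $F$. Closedness I would get by completeness: $\mathbb{K}^{I}=\prod_{i\in I}\mathbb{K}$ is a product of complete spaces, hence complete, so the topological isomorphism $Y\cong\mathbb{K}^{I}$ makes $Y$ a complete subspace of the Hausdorff space $X$, and therefore closed.

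The heart of the argument is that $Y\cong\mathbb{K}^{I}$ is complemented. Here I would exploit the universal property of the product: a linear map into $\mathbb{K}^{I}$ is continuous precisely when each of its scalar coordinates is continuous. Writing the isomorphism as $\phi=(\phi_{i})_{i\in I}\colon Y\to\mathbb{K}^{I}$ with $\phi_{i}\in Y^{\ast}$, I would use the Hahn--Banach theorem (available since $X$ is locally convex) to extend each $\phi_{i}$ to some $\tilde\phi_{i}\in X^{\ast}$; then $\Phi:=(\tilde\phi_{i})_{i\in I}\colon X\to\mathbb{K}^{I}$ is continuous and extends $\phi$, so $P:=\phi^{-1}\circ\Phi\colon X\to Y$ is a continuous projection onto $Y$. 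With $Z_{0}:=\ker P$ this yields a closed topological complement $X=Y\oplus Z_{0}$, where $Z_{0}\cong X/Y$; in particular $Y$ is quasicomplemented.

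To finish I must produce, for each finite-dimensional $F$ with $F\cap Y=\{0\}$ and $\dim F=\alpha$, a closed complement of $Y$ \emph{containing} $F$. Writing $Q:=\mathrm{id}-P$ for the projection onto $Z_{0}$, injectivity of $Q|_{F}$ is immediate (if $Qv=0$ then $v=Pv\in Y\cap F=\{0\}$), so for a basis $v_{1},\dots,v_{\alpha}$ of $F$ the vectors $z_{j}:=Qv_{j}$ are independent in $Z_{0}$; set $y_{j}:=Pv_{j}\in Y$. The idea is to perturb $P$ into a projection whose kernel contains $F$: define $R_{0}(z_{j}):=-y_{j}$ on $\operatorname{span}\{z_{1},\dots,z_{\alpha}\}$ (continuous, as its domain is finite-dimensional) and extend it to a continuous $R\colon Z_{0}\to Y$, again coordinatewise, using the product structure of $Y$ and Hahn--Banach on the locally convex space $Z_{0}$. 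Then $P':=P+R\circ Q$ is a continuous projection onto $Y$ (it fixes $Y$ since $Q|_{Y}=0$) with $P'v_{j}=y_{j}+R(z_{j})=0$, so $Z:=\ker P'$ is a closed complement of $Y$ with $F\subset Z$, $Y\cap Z=\{0\}$ and $Y+Z=X$ dense. Since $Z\cong X/Y$ one has $\dim Z=\dim X/Y$, which is infinite exactly when $\dim X/Y\geq\aleph_{0}$, i.e.\ when $\min\{\aleph_{0},\dim X/Y\}=\aleph_{0}$ and $\alpha$ is finite --- the only regime in which the required conclusion $\dim Z/F=\infty$ can hold. In that regime $\dim Z/F=\dim X/Y=\infty$, and $Y$ is totally $\alpha$-quasicomplemented.

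The main obstacle I anticipate is keeping everything continuous: both the extension of the coordinate functionals $\phi_{i}$ and the construction of $R$ must land in $X^{\ast}$ (respectively, be continuous), and this is exactly where local convexity (for Hahn--Banach) and the product structure of $\mathbb{K}^{I}$ (continuity into a product being tested coordinatewise) are indispensable. Algebraically the complement and its perturbation are routine; the real work is entirely topological.
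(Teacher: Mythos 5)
Your proof is correct, and its engine is the same as the paper's: the decisive step in both arguments is that a topological copy of $\mathbb{K}^{I}$ inside a Hausdorff locally convex space is complemented, obtained by writing the isomorphism coordinatewise, extending each coordinate functional by Hahn--Banach, and reassembling a continuous projection via the universal property of the product. Where you diverge is in how the finite-dimensional subspace $F$ is absorbed. The paper enlarges the subspace before complementing: it sets $W:=Y+F$, shows (via its Lemma \ref{l2.2}, $Y\oplus F\simeq Y\times F$, together with Remark \ref{Lemma para soma} for closedness) that $W$ is again isomorphic to some $\mathbb{K}^{J}$, applies the projection construction once to $W$ to get a closed $G$ with $X=W\oplus G$, and then takes $Z:=F+G$. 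You instead complement $Y$ itself, $X=Y\oplus Z_{0}$ with $P$ the projection, and then perturb $P$ by the finite-rank correction $R\circ Q$ so that the new kernel $Z=\ker P'$ contains $F$; the continuity of $R$ requires a second, small Hahn--Banach/coordinatewise extension from the finite-dimensional span of the $z_{j}$ into $Y$, which is legitimate since every linear functional on a finite-dimensional subspace of a Hausdorff TVS is continuous. Your route buys a cleaner identification $Z\simeq X/Y$ (the complement of $Y$ is canonically the quotient) and avoids having to verify that $Y+F$ is again a product $\mathbb{K}^{J}$, at the cost of the extra extension step; the paper's route applies the projection machinery only once but needs the auxiliary lemma on $Y\oplus F$. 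One further point in your favor: your explicit restriction to the regime $\dim X/Y\geq\aleph_{0}$ is exactly right, since when $\operatorname{codim}Y<\aleph_{0}$ any $Z$ with $Z\cap Y=\{0\}$ and $Y+Z$ dense satisfies $\dim Z\leq\operatorname{codim}Y<\infty$, so $\dim Z/F=\infty$ is unattainable; the paper's closing sentence ``since $\dim Z/F=\infty$, the result is done'' silently assumes the same regime, so your version makes explicit a caveat the paper glosses over.
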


\begin{proof}
Let $F$ be a subspace of $X$ such that%
\[
\dim F<\min\{\aleph_{0},\dim X/Y)\}\text{ \ \ \ \ and \ \ \ \ }F\cap
Y=\{0\}\text{.}%
\]
Let $I$ be a set such that $Y$ is isomorphic to $\mathbb{K}^{I}$. Since
$\mathbb{K}^{I}$ is complete and $X$ is Hausdorff, we conclude that $Y$ is
closed in $X$. By the remark \ref{Lemma para soma}, the vector subspace
$W:=F+Y$ is closed in $X$. We claim that the subspace $W$ is isomorphic to
$\mathbb{K}^{J}$, where
\[
\operatorname*{card}(J)=\left\{
\begin{array}
[c]{ll}%
\operatorname*{card}(I)\text{,} & \text{if }\operatorname*{card}%
(I)=\infty\text{,}\\
\operatorname*{card}(I)+\dim F\text{,} & \text{if }\operatorname*{card}%
(I)<\infty\text{.}%
\end{array}
\right.
\]
In fact, write $J=I\cup I^{\prime}$, where $I\cap I^{\prime}=\varnothing$ and
$\dim F=\operatorname{card}(I^{\prime})$. By Lemma \ref{l2.2}, we have
$\mathbb{K}^{I}\oplus\mathbb{K}^{I^{\prime}}\simeq\mathbb{K}^{I}%
\times\mathbb{K}^{I^{\prime}}$ and $Y\times F\simeq W$. Thus
\[
\mathbb{K}^{J}=\mathbb{K}^{I}\oplus\mathbb{K}^{I^{\prime}}\simeq\mathbb{K}%
^{I}\times\mathbb{K}^{I^{\prime}}\simeq Y\times F\simeq W\text{.}%
\]
Now, let $\phi=\left(  \phi_{j}\right)  _{j\in J}\colon W\longrightarrow
\mathbb{K}^{J}$ be an isomorphism, where $\phi_{j}\in W^{\ast}$ for every
$j\in J$. By the Hahn-Banach theorem, there is $\varphi_{j}\in X^{\ast}$ such
that%
\[
\varphi_{j}\left(  w\right)  =\phi_{j}\left(  w\right)  \text{ for each }w\in
W\text{.}%
\]
The map $\varphi=\left(  \varphi_{j}\right)  _{j\in J}\colon X\longrightarrow
\mathbb{K}^{J}$ is a continuous linear surjection whose restriction to $W$ is
$\phi$. Since the composition $\phi^{-1}\circ\varphi\colon X\longrightarrow W$
is a continuous projection onto $W$, there is a closed subspace $G$ of $X$
such that $G\cap W=\left\{  0\right\}  $ and $G+W=X$. The subspace $Z:=F+G$ is
a quasicomplement of $Y$ containing $F$. More precisely, $F\subset Z$ and
$Z\cap Y=\left\{  0\right\}  $ and $Y+Z=X$. Since $\dim Z/F=\infty$, the
result is done.
\end{proof}

\section{Final remarks in the context of spaceability}

A classical result due to Wilansky and Kalton says that if $Y$ is a closed
subspace of a Fr\'{e}chet space $X$, then $X\setminus Y$ is spaceable if, and
only if, $Y$ is infinite codimensional. In fact, Wilansky \cite[p.12]%
{Wilansky} proved this result for Banach spaces and Kalton noticed that the
same proof works for Fr\'{e}chet spaces (complete metrizable locally convex
vector spaces). This result appears in \cite{Kitson} in the following way:

\begin{theorem}
$($\cite[Theorem 2.2]{Kitson}$)$ If $Y$ is a closed subspace of a Fr\'{e}chet
space $X$, then $X\setminus Y$ is spaceable if, and only if, $Y$ has infinite codimension.
\end{theorem}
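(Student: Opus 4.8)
The plan is to prove both implications, the forward one being routine and the reverse one carrying all the weight. Recall that $X\setminus Y$ being spaceable means exactly that there is an infinite dimensional closed subspace $W$ of $X$ with $W\cap Y=\{0\}$. For the implication ``spaceable $\Rightarrow$ infinite codimension'', let $Q\colon X\to X/Y$ denote the quotient map and let $W$ be such a subspace. Since $\ker Q\cap W=Y\cap W=\{0\}$, the restriction $Q|_{W}$ is injective, so $X/Y$ contains an algebraic copy of $W$; hence $\dim X/Y\geq\dim W$, which is infinite, and therefore $Y$ is infinite codimensional. I expect this half to occupy only a few lines.

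For the converse, suppose $\operatorname{codim}Y=\infty$. First I would dispose of the case $\dim Y<\infty$: a finite dimensional subspace of a Hausdorff locally convex space is complemented (extend a basis of $Y^{\ast}$ by Hahn--Banach to obtain a continuous projection onto $Y$), so $X=Y\oplus C$ with $C$ closed; as $C\cong X/Y$ is infinite dimensional and $C\cap Y=\{0\}$, the space $C$ already witnesses spaceability. Assume now $\dim Y=\infty$. Since $X/Y$ is an infinite dimensional Fr\'echet space, it contains a basic sequence $(\bar e_{n})$ (Bessaga--Pe\l czy\'nski); let $(\bar e_{n}^{\ast})$ be its coordinate functionals in $(X/Y)^{\ast}$ and set $f_{n}:=\bar e_{n}^{\ast}\circ Q\in X^{\ast}$, so that $f_{n}|_{Y}=0$, i.e.\ $f_{n}\in Y^{\perp}$.

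The decisive step is to lift $(\bar e_{n})$ to a basic sequence in $X$. Choosing any $x_{n}\in X$ with $Qx_{n}=\bar e_{n}$ we already obtain the biorthogonality $f_{n}(x_{m})=\bar e_{n}^{\ast}(\bar e_{m})=\delta_{nm}$, and this relation is preserved when each $x_{n}$ is replaced by $x_{n}+y$ with $y\in Y$. Using this freedom together with $\dim Y=\infty$, I would run Mazur's almost orthogonalization inside the cosets $x_{n}+Y$ to arrange that $(x_{n})$ is a basic sequence in $X$; this simultaneous control of the vectors in $X$ and of their images in $X/Y$ is the main obstacle, since a continuous linear section of $Q$ need not exist in general. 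Granting it, put $W:=\overline{\operatorname{span}}\{x_{n}\}$, which is closed and infinite dimensional. For $w\in W$ the basic expansion $w=\sum_{n}a_{n}x_{n}$ converges, and continuity of $f_{k}$ together with biorthogonality give $f_{k}(w)=a_{k}$. If in addition $w\in Y$, then $a_{k}=f_{k}(w)=0$ for every $k$, whence $w=0$. Thus $W\cap Y=\{0\}$, so $W\subset(X\setminus Y)\cup\{0\}$ and $X\setminus Y$ is spaceable, completing the proof.
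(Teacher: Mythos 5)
Your forward implication and the finite-dimensional-$Y$ case are both correct, but the heart of the converse is explicitly conceded rather than proved: everything after ``Granting it'' rests on the lifting step, and the repair you sketch for that step would not work as stated. You propose to run Mazur's almost-orthogonalization ``inside the cosets $x_{n}+Y$'', using the freedom $x_{n}\mapsto x_{n}+y$ together with $\dim Y=\infty$. But at each stage Mazur's procedure requires making $\left\vert g\left(  x_{n+1}\right)  \right\vert$ small relative to the norm of $x_{n+1}$ for finitely many norming functionals $g\in X^{\ast}$ attached to the span of the previous vectors, and nothing prevents such $g$ from lying in $Y^{\perp}$ --- indeed your own functionals $f_{k}=\bar{e}_{k}^{\ast}\circ Q$ do. For $g\in Y^{\perp}$ the value $g\left(  x_{n+1}+y\right)  =g\left(  x_{n+1}\right)$ is constant on the coset, while $\left\Vert x_{n+1}+y\right\Vert$ is bounded below by the quotient norm of $\bar{e}_{n+1}$; the ratio to be shrunk is thus pinned at essentially $\left\vert \bar{g}\left(  \bar{e}_{n+1}\right)  \right\vert /\left\Vert \bar{e}_{n+1}\right\Vert$, a quantity the coset freedom cannot touch, so ``$\dim Y=\infty$'' gives no leverage whatsoever against $Y^{\perp}$ and lifting the \emph{given} sequence $\left(  \bar{e}_{n}\right)$ coset-by-coset can genuinely fail. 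The standard fix abandons exact lifting: at step $n+1$ observe that $H:=\bigcap_{i}\ker g_{i}$ has finite codimension in $X$, hence $Q\left(  H\right)$ has finite codimension in $X/Y$ and meets the tail $\overline{\operatorname*{span}}\left\{  \bar{e}_{k}:k>N_{n}\right\}$ nontrivially; pick $x_{n+1}\in H$ whose image $Qx_{n+1}$ is a nonzero block of $\left(  \bar{e}_{k}\right)$ supported beyond all previously used indices. Then $\left(  x_{n}\right)$ is basic, $\left(  Qx_{n}\right)$ is a block basic sequence in $X/Y$, and $w=\sum_{n}a_{n}x_{n}\in Y$ forces $\sum_{n}a_{n}Qx_{n}=0$ and hence $a_{n}=0$ for all $n$ --- no biorthogonal functionals are needed at all.

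There is a second, Fréchet-specific gap: Mazur's technique and the norming-functional device presuppose a continuous norm, which an arbitrary Fr\'{e}chet space need not admit (e.g. any space containing $\omega=\mathbb{K}^{\mathbb{N}}$), and even the existence of a basic sequence in $X/Y$, which you attribute to Bessaga--Pe\l czy\'{n}ski, rests on their dichotomy between spaces with a continuous norm and spaces containing a (complemented) copy of $\omega$; a complete argument must split along this dichotomy, or proceed as Kitson and Timoney do via their operator-range machinery. For the comparison you were asked about: the paper does not prove this statement --- it quotes it from \cite{Kitson} with historical attribution to Wilansky and Kalton --- so there is no in-paper proof to match against; judged on its own merits, your proposal is a reasonable outline whose decisive step is both unproven and, in the specific form proposed, unworkable.
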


Theorem \ref{X} shows, in particular, that if $Y\subset X$ is a subspace
isomorphic to $\mathbb{K}^{\mathbb{N}}$ then $X\setminus Y$ is $\left(
n,\operatorname{codim}Y\right)  $-spaceable for each $n\in\mathbb{N}$.

In \cite{GA} it was verified that regardless of the subspace $Y$ of a space
$X$, the $\left(  \alpha,\operatorname{codim}Y\right)  $-lineability of the
set $X\setminus Y$ always holds for any $\alpha<\operatorname{codim}Y$.
Evidently, this result is the best possible in the context of $\left(
\alpha,\beta\right)  $-lineability of the complement of vector subspaces.
However, it is already known that for certain types of topological vector
spaces, the optimal result obtained in \cite{GA} does not admit an analogue in
the context of $\left(  \alpha,\beta\right)  $-spaceability, as is the case of
\cite[Theorem 2.1]{Raposo}, since we can only consider $\alpha<\aleph_{0}$ in
the scope of $F$-spaces. Despite the non-validity of $\left(  \alpha
,\beta\right)  $-spaceability for $\alpha\geq\aleph_{0}$ in general, many
results obtained for specific types of subspaces $Y$ yield $\left(
n,\operatorname{codim}Y\right)  $-spaceability for all $n\in\mathbb{N}$, as is
the case of Theorem \ref{X} (where $Y$ is closed), the result obtained in
\cite{GAAG} (where $Y$ is dense and $X$ is separable) and several applications
of the main results of \cite{GAAG2} (where there are examples where $X$ is not
separable). Even with strong evidences that the spaceability of $X\setminus Y$
implies the $(n,\operatorname{codim}Y)$-spaceability of $X\setminus Y$ for all
$n\in\mathbb{N}$ (at least for Fr\'{e}chet spaces), we do not know what
minimum hypotheses must apply to $Y$ and $X$ for such a result to be true.

Given this, we have the following problem:

\begin{problem}
If $Y$ is a closed subspace of a Fr\'{e}chet space $X$ with $\dim Y$,
$\operatorname{codim}Y=\infty$, and $X\setminus Y$ is spaceable, then is it
also $\operatorname{codim}Y$-spaceable?
\end{problem}

\end{document}